\newcommand{\bZ}{\mathbb{Z}}
\newcommand{\bC}{\mathbb{C}}
\newcommand{\bF}{\mathbb{F}}
\newcommand{\gl}{\mathrm{GL}}
\newcommand{\unitary}{\mathrm{U}}
\newcommand{\cP}{\mathcal{P}}
\newcommand{\cF}{\mathcal{F}}
\DeclareMathOperator{\Irr}{Irr}
\DeclareMathOperator{\cl}{Cl}
\theoremstyle{plain}
\newtheorem{theorem}{Theorem}[section]
\newtheorem{lemma}[theorem]{Lemma}
\newtheorem{conjecture}[theorem]{Conjecture}
\newtheorem*{theorem*}{Theorem}
\numberwithin{equation}{section}
\title[conjecture II for $\gl_n$ and $\unitary_n$]{Harada's conjecture II for the finite general linear groups and unitary groups}
\author{Sugimoto Masahiro}
\date{}
\keywords{general linear group, irreducible character, conjugacy classes}
\address{Doctoral Program in Mathematics, Degree Programs in Pure and Applied Sciences, Graduate School of Science and Technology, University of Tsukuba}
\email{sugimoto-m@math.tsukuba.ac.jp}
\begin{document}

\begin{abstract}
  K. Harada conjectured for any finite group $G$, the product of sizes of all conjugacy classes is divisible by the product of degrees of all irreducible characters.
  We study this conjecture when $G$ is the general linear group over a finite field.
  We show the conjecture holds if the order of the field is sufficiently large.
\end{abstract}

\maketitle





\section{Introduction}
Let $G$ be a finite group, and let $\cl(G)=\{K_1,\cdots,K_l\}$ and $\Irr(G)=\{\chi_1,\cdots,\chi_l\}$ denote the set of all conjugacy classes of $G$ and the set of all irreducible characters of $G$ respectively. 
Extending linearly irreducible representations $\rho_i: G\to \gl_{n_i}(\bC)$ of G affording $\chi_i$, we obtain $\bC$-algebra homomorphisms $\tilde{\rho_i}: \bC G\to M_{n_i}(\bC)$ from the group algebra $\bC G$ to the matrix algebra $M_{n_i}(\bC)$.
The restriction of $\bC G\to \prod^l_{i=1} M_{n_i}(\bC)$, obtained from $(\tilde{\rho_i})_{1\leq i\leq l}$, to the center $Z(\bC G)$, is the $\bC$-algebra isomorphism 
$\omega: Z(\bC G) \to Z(\prod^l_{i=1} M_{n_i}(\bC))$.

Let $W$ denote the representation matrix of $\omega$ with respect to the basis $\{\sum_{x \in K_i} x\}_{1\leq i \leq l}$ of $Z(\bC G)$ and the basis $\{(0,\cdots,id_{n_i},\cdots,0)\}_{1\leq i \leq l}$ of $Z(\prod^l_{i=1} M_{n_i}(\bC))= \prod^l_{i=1} Z(M_{n_i}(\bC))$ where $(0,\cdots,id_{n_i},\cdots,0)$ denotes the identity matrix as the $i$-th matrix and the others as zero.
We set $X$ to be the character table of $G$.
$X$ is the representation matrix of $\omega$ with respect to the basis $\{\frac{1}{\# K_i}\sum_{x \in K_i} x\}_{1\leq i \leq l}$ of $Z(\bC G)$ and the basis $\{(0,\cdots,\frac{1}{n_i}id_{n_i},\cdots,0)\}_{1\leq i \leq l}$ of $\prod^l_{i=1} Z(M_{n_i}(\bC))$.
Then, we let 
\[h(G):=\frac{\det W}{\det X}=\prod_{i=1}^{l} \frac{\# K_{i} }{\deg \chi_{i} }.\]

For example, when $G$ is the quaternion group $Q_8$, the character table is 
\begin{table}[htb]
  \centering
  \caption{The character table of $Q_8$}
  \begin{tabular}{c|c|c|c|c|c}
    $Q_8$ & $1$ & $-1$ & $i$ & $j$ & $k$ \\ \hline
    $\chi_1$ & $1$ & $1$ & $1$ & $1$ & $1$ \\
    $\chi_2$ & $1$ & $1$ & $-1$ & $-1$ & $1$ \\
    $\chi_3$ & $1$ & $1$ & $1$ & $-1$ & $-1$ \\
    $\chi_4$ & $1$ & $1$ & $-1$ & $1$ & $-1$ \\
    $\chi_5$ & $2$ & $-2$ & $0$ & $0$ & $0$ \\
  \end{tabular}
\end{table}

Thus we have 
\[h(Q_8)= \frac{1\cdot 1 \cdot 2\cdot 2\cdot 2}{1\cdot 1\cdot 1\cdot 1\cdot 2}= 4.\]

\begin{conjecture}[``Harada's conjecture II''\cite{MR3888878}]\label{conjH}
$h(G)$ is an integer for any finite group $G$.
\end{conjecture}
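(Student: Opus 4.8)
The statement is the full conjecture, which is open in general; my plan is therefore to isolate the single genuinely arithmetic input, explain why it resists a uniform argument, and thereby motivate the family-by-family attack that the present paper carries out for $\gl_n$. The first move is to pass from integrality to \emph{algebraic} integrality. Since $h(G)=\prod_i \#K_i/\prod_i \deg\chi_i$ is a product of positive rationals it lies in $\mathbb{Q}_{>0}$ automatically, so it suffices to show $h(G)\in\overline{\bZ}$, because a rational algebraic integer is a rational integer. To access this I would use the determinant presentation $h(G)=\det W/\det X$ recalled above: the entries of $W$ are exactly the central character values $W_{ij}=\omega_{\chi_i}(\sum_{x\in K_j}x)=\#K_j\,\chi_i(g_j)/\deg\chi_i$, which are well known to be algebraic integers, so $\det W\in\overline{\bZ}$. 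The content of the conjecture is thus the divisibility $\det X\mid\det W$ in $\overline{\bZ}$, equivalently the integer divisibility $\prod_i\deg\chi_i\mid\prod_i\#K_i$.

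I would then attempt to prove this divisibility one prime at a time, i.e.\ $v_p\bigl(\prod_i\deg\chi_i\bigr)\le v_p\bigl(\prod_i\#K_i\bigr)$ for every prime $p$, where $\#K_i=[G:C_G(g_i)]$. Two structural facts supply the levers. From row orthogonality one has $X\,\mathrm{diag}(\#K_j)\,\overline{X}^{\mathsf T}=|G|I$, whence $|\det X|^2=\prod_i|C_G(g_i)|$ and $\prod_i\#K_i=|G|^l/\prod_i|C_G(g_i)|$, so the class-size side is governed entirely by centralizer orders. On the degree side I would use that each $\deg\chi_i$ divides $[G:Z(G)]$ and organize the $\chi_i$ by $p$-defect, trying to bound the $p$-adic valuation of $\prod_i\deg\chi_i$ against that of the centralizer product through block theory (defect groups, height considerations). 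The group $\gal(\mathbb{Q}(\zeta_{|G|})/\mathbb{Q})$ acts compatibly on the rows (via character values) and the columns (via $g\mapsto g^m$), keeping $\det X$ and $\det W$ in a common field and permitting a valuation comparison after localizing.

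The realistic route to the full conjecture, and the one this paper implicitly fits into, is reduction to (quasi)simple groups via the classification of finite simple groups together with Clifford theory: one expresses the $\chi_i$ and $K_i$ of $G$ in terms of those of a normal subgroup $N$ and the quotient $G/N$, hoping that $h$ behaves multiplicatively enough along a composition series to reduce the problem to the simple composition factors, whose character theory is available. The groups of Lie type — foremost $\gl_n(\bF_q)$, with its explicit parametrization of $\Irr$ and transparent conjugacy-class and centralizer data whose degrees and class sizes are polynomials in $q$ — form the decisive test family.

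The hard part, and the reason the conjecture remains open, is that no clean multiplicativity of $h$ under extensions and no general local inequality is known: the divisibility $\prod_i\deg\chi_i\mid\prod_i\#K_i$ must be verified essentially family by family rather than by a single structural argument. Even within the $\gl_n$ family one only manages to control the competing powers of $q$ against the cyclotomic factors arising from the degree denominators when $q$ is large, which is precisely the hypothesis appearing in the main theorem.
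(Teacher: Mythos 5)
You have not produced a proof, and in fairness none exists to compare against: the statement you were given is Harada's conjecture itself, which the paper explicitly records as open and does \emph{not} prove. The paper's actual contribution (Theorems \ref{invq} and \ref{largeq}) is a verification for the single family $\gl_n(k)$ with $q$ large, so the only meaningful review is of the internal correctness of your reductions and of whether your program could close the gap. It cannot, because every step you carry out is an equivalence rather than a reduction in strength: passing to algebraic integrality ($h(G)\in\overline{\bZ}$ iff $h(G)\in\bZ$ since $h(G)\in\mathbb{Q}$), rewriting the claim as $\prod_i\deg\chi_i \mid \prod_i \#K_i$, and the prime-by-prime reformulation $\sum_i d_p(\chi_i)\geq \sum_i v_p(|C_G(g_i)|)$ obtained from $|\det X|^2=\prod_i|C_G(g_i)|$ are all correct but are restatements of the conjecture, not progress toward it. The two steps that would constitute actual content --- a block-theoretic inequality bounding the defect sum against centralizer $p$-parts, and a multiplicativity of $h$ along normal subgroups permitting a CFSG reduction --- are precisely the steps you leave as hopes, and you yourself concede no such statements are known. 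A proposal whose every completed step is reversible and whose every irreversible step is missing has a gap equal to the whole problem.

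Two smaller corrections. First, your description of what the paper achieves for $\gl_n$ inverts its structure: the cyclotomic factors coming from the degree denominators are controlled for \emph{all} $q$ (that is Theorem \ref{invq}, via the integrality of $\Phi(\lambda)$ by induction on the length of the partition); it is only the pure power of $q$, i.e.\ the valuation $v_q(h(\gl_n(k)))=\sum_{||\nu||=n}\Omega(\nu)$, that can go negative term by term and is rescued only for large $q$ by the positivity of the leading coefficient of this polynomial in $q$ (Theorem \ref{largeq}). Second, the paper makes no appeal to a reduction to simple groups, Clifford theory, or blocks anywhere; it works entirely from Green's explicit formulas for centralizer orders $a_\lambda(q)$ and character degrees $b_\lambda(q)$, so your framing of the paper as ``implicitly fitting into'' a CFSG-reduction program attributes to it a strategy it does not use.
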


This question is still open.
Since their character tables are well known, we can easily check abelian groups and dihedral groups satisfy the condition $h(G)\in \bZ$.
Using GAP \cite{gap}, the author verified Conjecture \ref{conjH} for all groups whose orders are less than 1000 excepting $2^9=512$.
A. Hida \cite{hi} proves Conjecture \ref{conjH} for symmetric and alternating groups using the hook length formula.
Some simple groups and their maximal subgroups on ATLAS were confirmed by N. Chigira (unpublished).

Our main results are Theorems \ref{invq} and \ref{largeq} on  the general linear group $\gl_n(k)$ of degree $n$ over the finite field $k$ with $q$ elements.

\begin{theorem} \label{invq}
  Let $k$ be a finite field with $q$ elements.
  Then we have
  \[h(\gl_n(k)) \in \bZ[q^{-1}].\]
\end{theorem}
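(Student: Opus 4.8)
The plan is to work one prime at a time. Since $q=p^{f}$ with $p$ the characteristic of $k$, we have $\bZ[q^{-1}]=\bZ[1/p]$, and an element of $\mathbb{Q}$ lies in this ring exactly when its $\ell$-adic valuation is nonnegative for every prime $\ell\neq p$; so it suffices to show that no prime other than $p$ occurs in the denominator of $h(\gl_n(k))$. I would first recall the two inputs from Green's theory and phrase them so that their common combinatorial skeleton is visible. Set $G=\gl_n(k)$. Both $\cl(G)$ and $\Irr(G)$ are indexed by the same kind of data, a function $\Phi$ assigning a partition to each object of a degree-graded index set: monic irreducible polynomials $f\neq x$ of degree $d_{f}$ on the class side, and Frobenius orbits $\gamma$ on $\overline{k}^{\times}$ of size $d_{\gamma}$ on the character side, in both cases subject to the constraint that the degree-weighted total of the partition sizes is $n$. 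A direct count shows that for each $d$ the number of such polynomials equals the number of such orbits, so the two index sets are in degree-preserving bijection.

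Writing $\psi_n(q):=\prod_{i=1}^{n}(q^{i}-1)=|G|_{p'}$, I would then record the prime-to-$p$ parts of the two relevant quantities. Green's centralizer formula gives $|C_{G}(c_{\mu})|=\prod_{f}a_{\mu(f)}(q^{d_{f}})$, whose prime-to-$p$ part at each factor is $C_{\lambda}(Q):=\prod_{i\ge 1}\prod_{j=1}^{m_{i}(\lambda)}(Q^{j}-1)$ evaluated at $(\lambda,Q)=(\mu(f),q^{d_{f}})$, where $m_{i}(\lambda)$ is the multiplicity of $i$ as a part of $\lambda$. The $q$-analogue of the hook-length formula for character degrees gives that the prime-to-$p$ part of $\deg\chi_{\varphi}$ equals $\psi_n(q)/\prod_{\gamma}H_{\varphi(\gamma)}(q^{d_{\gamma}})$, where $H_{\lambda}(Q):=\prod_{x\in\lambda}(Q^{h(x)}-1)$ is the hook-length product over the cells of $\lambda$. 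Since $\prod_{i}\#K_{i}=|G|^{l}/\prod_{\mu}|C_{G}(c_{\mu})|$ and there are $l$ classes and $l$ characters, the factor $\psi_n(q)^{l}$ appears in the prime-to-$p$ part of both $\prod_{i}\#K_{i}$ and $\prod_{i}\deg\chi_{i}$ and cancels in the quotient. Up to a power of $q$, which is irrelevant in $\bZ[q^{-1}]$, this expresses $h(G)$ as
\[ \frac{\prod_{\varphi}\prod_{\gamma}H_{\varphi(\gamma)}(q^{d_{\gamma}})}{\prod_{\mu}\prod_{f}C_{\mu(f)}(q^{d_{f}})}. \]

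Transporting both products along the degree-preserving bijection of index sets, they now run over the identical set of configurations $\Phi$, so $h(G)$ becomes, up to a power of $q$, the product over all configurations $\Phi$ and all objects $o$ of the single-partition ratios $H_{\Phi(o)}(q^{d_{o}})/C_{\Phi(o)}(q^{d_{o}})$. Everything therefore reduces to one divisibility statement about partitions: $C_{\lambda}(Q)\mid H_{\lambda}(Q)$ in $\bZ[Q]$ for every partition $\lambda$. Granting this, each ratio is an integer, so $h(G)$ is a power of $q$ times an integer and hence lies in $\bZ[q^{-1}]$. To prove the divisibility I would factor $Q^{k}-1=\prod_{d\mid k}\Phi_{d}(Q)$ into cyclotomic polynomials and reduce it to the inequality of multiplicities
\[ \#\{x\in\lambda: d\mid h(x)\}\ \ge\ \sum_{i\ge 1}\left\lfloor m_{i}(\lambda)/d\right\rfloor \qquad(d\ge 1). \]
This I would obtain from the first-column hook lengths alone: writing $r$ for the number of parts of $\lambda$, the cell $(i,1)$ has hook length $\lambda_{i}+r-i$, and a maximal run of $m$ equal parts contributes $m$ consecutive integers to this list, among which at least $\lfloor m/d\rfloor$ are divisible by $d$; runs attached to different part-values lie in different rows, so summing over part-values produces $\sum_{i}\lfloor m_{i}(\lambda)/d\rfloor$ distinct first-column cells with hook length divisible by $d$, which is a fortiori a lower bound for the count over all cells.

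The step I expect to demand the most care is the cancellation assembling the quotient: extracting from the separately stated degree and centralizer formulas that the cyclotomic numerators of $\prod_{i}\#K_{i}$ and $\prod_{i}\deg\chi_{i}$ are both exactly $\psi_n(q)^{l}$, and that the surviving factors are precisely the hook and centralizer products matched by the bijection, is delicate bookkeeping resting on the precise shape of Green's formulas. By contrast the combinatorial lemma, although it is the conceptual core, admits the short first-column argument above; I therefore expect the difficulty to be concentrated in the parametrization-and-cancellation step rather than in the partition inequality.
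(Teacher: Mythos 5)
Your proposal is correct, and its global skeleton coincides with the paper's: both exploit the fact that classes and characters of $\gl_n(k)$ are indexed by the same data $\nu:\cF\to\cP$ with $\|\nu\|=n$, pair the class and the character carrying the same label, strip off the $p$-power (equivalently, the $q$-power) part, and reduce everything to a divisibility statement about a single partition. The difference lies in how that partition-level statement is formulated and proved. The paper works directly with Green's expressions $a_\lambda,b_\lambda$ and shows the ratio $\Phi(\lambda)=\prod_r\psi_{\lambda_r+l-r}(q)\big/\bigl(\prod_s\psi_{\lambda'_s-\lambda'_{s+1}}(q)\prod_{i<j}(q^{\lambda_i-\lambda_j-i+j}-1)\bigr)$ is an integer by induction on $l(\lambda)$, adding a new largest part $x$ and checking that the new denominator factors $(q^{x-\lambda(i)}-1)$ and $(q^{\lambda'_x+1}-1)$ have distinct exponents all bounded by $x+l(\lambda)$, hence divide $\psi_{x+l(\lambda)}(q)$. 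You instead recognize (via the classical identity behind the $q$-hook-length formula) that this same ratio is $H_\lambda(q)/C_\lambda(q)$, the hook-length product over the multiplicity product, and prove the stronger polynomial divisibility $C_\lambda(Q)\mid H_\lambda(Q)$ in $\bZ[Q]$ by comparing cyclotomic multiplicities, with the inequality $\#\{x:d\mid h(x)\}\ge\sum_i\lfloor m_i(\lambda)/d\rfloor$ supplied by the first-column hook lengths (each maximal run of $m$ equal parts gives $m$ consecutive integers). Your cyclotomic argument is correct and buys a cleaner, non-inductive proof together with a genuinely stronger conclusion (divisibility as polynomials in $Q$, uniform in $q$); what it costs is the extra input of the hook-length form of Green's degree formula and the matching of index sets on the character side, bookkeeping the paper avoids by quoting Green's formulas for $a_\lambda$ and $b_\lambda$ verbatim and never mentioning hooks. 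Both routes are valid proofs of the theorem.
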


\begin{theorem} \label{largeq}
  For any fixed $n$, there exists an integer $q_n$ such that if $q>q_n$, then $h(\gl_n(k)) \in \bZ$.
\end{theorem}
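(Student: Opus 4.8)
The plan is to use Theorem \ref{invq} to localize the whole problem at the characteristic $p$. Write $q=p^f$. Since $p^{-1}=p^{f-1}\cdot q^{-1}\in\bZ[q^{-1}]$ and $q^{-1}=(p^{-1})^f\in\bZ[p^{-1}]$, we have $\bZ[q^{-1}]=\bZ[p^{-1}]$; hence Theorem \ref{invq} already guarantees that the $\ell$-adic valuation $v_\ell(h(\gl_n(k)))\ge 0$ for every prime $\ell\ne p$. Therefore $h(\gl_n(k))\in\bZ$ is equivalent to the single inequality $v_p(h(\gl_n(k)))\ge 0$, and this is all I would need to establish.

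To compute $v_p(h)$ I would use $h(\gl_n(k))=\prod_i \#K_i/\prod_i \deg\chi_i$ together with the fact that, in defining characteristic, the $p$-part of every conjugacy class size and of every irreducible degree of $\gl_n(k)$ is a power of $q$. Writing $(\#K_i)_p=q^{b_i}$ and $(\deg\chi_i)_p=q^{a_i}$ with $b_i,a_i\in\bZ_{\ge 0}$, this yields
\[ v_p(h(\gl_n(k)))=f\Big(\sum_i b_i-\sum_i a_i\Big), \]
so it suffices to show that $B(q):=\sum_i b_i$ exceeds $A(q):=\sum_i a_i$ once $q$ is large.

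The decisive step is an asymptotic comparison of $B(q)$ and $A(q)$ as functions of $q$. On the class side, grouping by rational canonical form, the regular semisimple classes have centralizer a maximal torus of order prime to $p$, so there $b_i=\binom{n}{2}$ is maximal; since these classes are enumerated by the squarefree monic characteristic polynomials with nonzero constant term, there are $q^n+O(q^{n-1})$ of them, while the remaining classes number only $O(q^{n-1})$ and carry bounded $b_i$. Hence $B(q)=\binom{n}{2}q^n+O(q^{n-1})$. On the character side I would invoke Lusztig's Jordan decomposition: the semisimple characters, indexed by the $q^n-q^{n-1}$ semisimple classes of the dual group, all have degree prime to $p$ and so contribute $a_i=0$, whereas every remaining irreducible character corresponds to a nontrivial unipotent character of a proper centralizer and satisfies $a_i\le\binom{n}{2}$. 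As $|\Irr(\gl_n(k))|=|\cl(\gl_n(k))|$ grows like $q^n$ with the same leading coefficient as the count of semisimple classes, only $O(q^{n-1})$ characters have $p$-divisible degree, whence $A(q)=O(q^{n-1})$.

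Combining the two estimates gives $B(q)-A(q)=\binom{n}{2}q^n+O(q^{n-1})$, which is strictly positive for all sufficiently large $q$; this forces $v_p(h(\gl_n(k)))\ge 0$, proves $h(\gl_n(k))\in\bZ$, and simultaneously yields an explicit threshold $q_n$. The main obstacle I anticipate is the character-side bound $A(q)=O(q^{n-1})$: it rests on the precise facts that the bulk family of characters (the semisimple ones) has degree prime to $p$, that the exceptional characters are comparatively few and carry a uniformly bounded $q$-exponent, and that the pertinent class- and character-counts are polynomial in $q$ with matching leading terms. Extracting these statements uniformly across all combinatorial types from the Deligne--Lusztig classification of $\Irr(\gl_n(k))$ is the delicate part; by contrast the estimate for $B(q)$ is a routine enumeration of conjugacy classes by rational canonical form.
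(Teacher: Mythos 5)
Your argument is correct, but it reaches the decisive leading-term estimate by a genuinely different route from the paper. The paper never leaves Green's parametrization: conjugacy classes and irreducible characters are indexed by the \emph{same} maps $\nu:\cF\to\cP$ with $||\nu||=n$, so it pairs each class with its character and works with the single quantity $\Omega(\nu)=\binom{n}{2}-\sum_{f}\Psi(\nu(f))\deg(f)$; it then shows $v_q(h(\gl_n(k)))=\sum_{||\nu||=n}\Omega(\nu)$ is a polynomial in $q$ of degree exactly $n$ (by grouping the $\nu$ into equivalence classes whose sizes are polynomials in the counts $N(d)$) and reads off that the degree-$n$ coefficient is $\binom{n}{2}\sum c_{[\nu]}>0$, contributed precisely by the $\nu$ with $\nu(\cF)=\{(1),()\}$ --- your regular semisimple classes. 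You instead split the valuation into a class side $B(q)$ and a character side $A(q)$ and import Lusztig's Jordan decomposition to get that semisimple characters have degree prime to $p$ and that only $O(q^{n-1})$ characters have $p$-divisible degree; together with the trivial bound $a_i\le\binom{n}{2}$ and the standard polynomial class count, your estimates $B(q)=\binom{n}{2}q^{n}+O(q^{n-1})$ and $A(q)=O(q^{n-1})$ do close the argument. The ``delicate part'' you flag on the character side is, however, exactly what Green's degree formula (already quoted in \S 2 and used to prove Theorem~\ref{invq}) hands you for free: the $q$-part of $\deg\chi_\nu$ is $q^{\sum_f \deg(f)\sum_i (i-1)\lambda_i}$ with $\lambda=\nu(f)$, so the characters of degree prime to $p$ are precisely those for which every $\nu(f)$ is a one-row partition, and these are counted by monic degree-$n$ polynomials with nonzero constant term --- no Deligne--Lusztig theory needed. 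The paper's uniform treatment has the further payoff of producing $v_q(h(\gl_n(k)))$ as an explicit polynomial, which is what makes the small-$q$ verifications of \S 4 and the appendix possible, whereas your asymptotic version only yields an inexplicit threshold $q_n$.
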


Moreover, when $G$ is the finite unitary group, we prove same result (Theorem \ref{uni}) by ``Ennola duality''.
V. Ennola \cite{enn} conjectured certain class functions on the finite unitary group $\unitary_n$ obtained from the irreducible characters of $\gl_n(q)$ is the irreducible characters of $\unitary_n$.
Ennola's conjecture is proved by N. Kawanaka \cite{ka} and called Ennola duality.

\begin{theorem} \label{uni}
  Let $q$ be a power of a prime. Then $h(\unitary_n(q)) \in \bZ[q^{-1}].$
  Moreover, for any fixed $n$, there exists an integer $q_n$ such that if $q>q_n$, then $h(\unitary_n) \in \bZ$.
\end{theorem}

I would like to thank my advisor, Scott Carnahan for his continuous guidance.
This work was supported by JST SPRING, Grant Number JPMJSP2124.

\section{Representations of $\gl_n(k)$}
First, we classify conjugacy classes of $\gl_n(k)$ by characteristic polynomials and partitions.

The order of $\gl_n(k)$ is
\[|\gl_n(k)|=q^{\binom{n}{2}}\psi_n(q)\] 
where $\psi_n(q)=(q-1)(q^2-1)\cdots(q^n-1)$.

For a partition $\lambda= (\lambda_1, \cdots \lambda_l)$ with $\lambda_1 \geq \cdots \geq \lambda_l >0$, we write $l(\lambda)=l$ for the length of $\lambda$ and $\lambda'= (\lambda'_1, \cdots \lambda'_{l'})$ for the conjugate partition of $\lambda$.

Let $f$ be a monic polynomial $f(t) = t^d - a_{d-1}t^{d-1} -\cdots - a_0 \in k[t]$.
We define the $d\times d$ matrix 
\[
U_1(f)=
\begin{pmatrix}
  & 1 & & \\
  & & \ddots & \\
  & & & 1 \\
  a_0 & a_1 & \cdots & a_{d-1} \\
\end{pmatrix}
\]
whose characteristic polynomial is $f(t)$, the $rd \times rd$ matrix
\[
U_r(f)=
\begin{pmatrix}
  U_1(f) & I_d & & \\
  & \ddots & \ddots & \\
  & & & I_d \\
  & & & U_1(f) \\
\end{pmatrix}
\]
where $I_d$ is the unit of $\gl_d(k)$ and, for a partition $\lambda=(\lambda_1, \cdots, \lambda_l)$,
\[
U_{\lambda}(f)=
\begin{pmatrix}
  U_{\lambda_1}(f) & & & \\
  & U_{\lambda_2}(f) & & \\
  & & \ddots & \\
  & & & U_{\lambda_l}(f) \\
\end{pmatrix}.
\]

\begin{lemma}[Green {\cite[Lemma 1.1]{MR72878}}]\label{class}
If the characteristic polynomial $f_{\alpha}$ of $\alpha \in \gl_n(k)$ decomposes as $f_{\alpha}=f_1^{m_1}\cdots f_p^{m_p}$, where $f_1,\cdots,f_p$ are distinct irreducible polynomials over $k$,
then 
$\alpha$ is conjugate to a matrix of the form
\[
\begin{pmatrix}
  U_{\nu_1} & & \\
  & \ddots & \\
  & & U_{\nu_p}
\end{pmatrix}
\]
where $\nu_1,\cdots,\nu_p$ are respective partitions of $m_1,\cdots,m_p$.
\end{lemma}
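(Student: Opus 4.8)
The plan is to recast everything in terms of the module theory of the principal ideal domain $k[t]$. First I would turn $V=k^n$ into a $k[t]$-module by declaring that $t$ acts as $\alpha$; the resulting module is finitely generated and torsion, and the basic dictionary of rational canonical form says that two elements of $\gl_n(k)$ are conjugate if and only if the associated $k[t]$-modules are isomorphic. Under this dictionary the characteristic polynomial $f_\alpha$ equals the product of the elementary divisors, so the given factorization $f_\alpha=f_1^{m_1}\cdots f_p^{m_p}$ into distinct monic irreducibles records exactly which primes of $k[t]$ occur in $V$, together with their total multiplicities.

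Next I would apply the primary decomposition $V=\bigoplus_{i=1}^p V_i$, where $V_i=\ker f_i(\alpha)^{m_i}$ is the $f_i$-primary component. This decomposition is $\alpha$-stable, so in a basis adapted to it $\alpha$ becomes block diagonal with blocks $\alpha|_{V_i}$, the $i$-th block having characteristic polynomial $f_i^{m_i}$. Within each component the structure theorem over $k[t]$ gives $V_i\cong\bigoplus_j k[t]/(f_i^{\lambda_{i,j}})$ with $\lambda_{i,1}\ge\lambda_{i,2}\ge\cdots>0$; comparing $k$-dimensions yields $\deg f_i\cdot\sum_j\lambda_{i,j}=\dim_k V_i=m_i\deg f_i$, so $\nu_i:=(\lambda_{i,1},\lambda_{i,2},\dots)$ is a partition of $m_i$. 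It then remains only to check that each cyclic summand $k[t]/(f^a)$ is realized by the hypercompanion block $U_a(f)$, since assembling these identifies $\alpha|_{V_i}$ with $U_{\nu_i}(f_i)$ and finishes the argument.

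That last step is where I expect the genuine work, and it is the only place the finiteness of $k$ enters. I would show that $U_a(f)$ has both characteristic and minimal polynomial equal to $f^a$, so that it defines a cyclic $k[t]$-module, necessarily $\cong k[t]/(f^a)$. The characteristic polynomial is clear, as $U_a(f)$ is block triangular with diagonal blocks $U_1(f)$ of characteristic polynomial $f$. For the minimal polynomial I would write $U_a(f)=A+M$ with $A=I_a\otimes U_1(f)$ and $M=J_a\otimes I_d$, where $J_a$ is the nilpotent Jordan block, $d=\deg f$, and $I_a,I_d$ are identity matrices. These commute, $f(A)=0$ by Cayley--Hamilton, and $M^a=0$ while $M^{a-1}\ne 0$. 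Expanding the polynomial $f(A+M)$ in the commuting variables $A,M$ and discarding the vanishing $s=0$ term and all terms in $M^{\ge a}$, one finds $f(U_a(f))^{a-1}=J_a^{a-1}\otimes f'(U_1(f))^{a-1}$, which is nonzero precisely when $f'(U_1(f))$ is invertible.

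The invertibility of $f'(U_1(f))$ is therefore the crux. It holds because $f$ is separable: then $\gcd(f,f')=1$, so a Bézout relation $uf+vf'=1$ evaluated at $U_1(f)$ gives $v(U_1(f))f'(U_1(f))=I$ after using $f(U_1(f))=0$. The separability is exactly what the finiteness hypothesis supplies, since every irreducible polynomial over the finite (hence perfect) field $k$ is separable. Consequently the minimal polynomial of $U_a(f)$ is exactly $f^a$, the summand is cyclic, and the classification follows. The anticipated obstacle is entirely this separability input; over an imperfect field $f'(U_1(f))$ could fail to be invertible and the stated normal form could break down, so the finiteness of $k$ is used in an essential way at precisely this point.
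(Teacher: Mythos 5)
The paper does not prove this lemma at all --- it is quoted from Green's 1955 paper as a known classification, so there is no internal proof to compare against. Judged on its own, your argument is correct and is the standard module-theoretic derivation: view $k^n$ as a torsion $k[t]$-module via $\alpha$, take the primary decomposition, decompose each $f_i$-primary component into cyclic summands $k[t]/(f_i^{\lambda_{i,j}})$, and check that dimensions force the exponents to form a partition of $m_i$. You correctly locate the only nontrivial step, namely that the block $U_a(f)$ realizes the cyclic module $k[t]/(f^a)$, and your computation $f(U_a(f))^{a-1}=J_a^{a-1}\otimes f'(U_1(f))^{a-1}$ is valid: writing $U_a(f)=A+M$ with commuting $A=I_a\otimes U_1(f)$ and nilpotent $M=J_a\otimes I_d$, the binomial expansion of each power $(A+M)^i$ uses only integer binomial coefficients, so the identity $f(A+M)=f(A)+f'(A)M+(\text{terms in }M^{\ge 2})$ holds in any characteristic without dividing by factorials --- a point worth making explicit since a naive Taylor expansion would be suspect over $\mathbb{F}_p$. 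From $\gcd(f,f')=1$ (separability of irreducibles over the perfect field $k$) you get $f'(U_1(f))$ invertible, hence the minimal polynomial of $U_a(f)$ equals its characteristic polynomial $f^a$, hence cyclicity, and the lemma follows. Your closing observation that separability is genuinely needed is also right: for an inseparable irreducible such as $t^p-s$ over $\mathbb{F}_p(s)$ one finds $f(U_2(f))=0$, so the hypercompanion form fails over imperfect fields. The one cosmetic slip is the stray ``$s=0$ term'' (you mean the $j=0$ term of the expansion); otherwise the proof is complete.
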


It follows from Lemma \ref{class} that conjugacy classes of $\gl_n(k)$ are parametrized by maps from the set $\cF$ of monic irreducible polynomials excluding $f(t)=t$ to the set $\cP$ of partitions.

For a map $\nu : \cF \to \cP$, we define
\[
  ||\nu|| = \sum |\nu(f)|\deg(f).
\]

When $||\nu||$ is finite, the number of polynomials $f_1,\cdots, f_p$ which do not map to the empty partition $()$ is finite.

Then if $||\nu|| =n $, the matrices
\[
U_\nu = 
\begin{pmatrix}
  U_{\nu(f_1)} & & \\
  & \ddots & \\
  & & U_{\nu(f_p)}
\end{pmatrix}
\]
are representatives of conjugacy classes.

Let $C_\nu$ be the centralizer of $U_\nu$.
The order of $C_\nu$ is 
\[
  |C_\nu|= \prod_{f\in \cF}a_{\nu(f)}(q^{\deg(f)})
\]
where 
\[
  a_{\lambda}(q)= q^{\binom{l(\lambda)}{2}+\sum_i \lambda'_{i}\lambda'_{i+1}} \prod_i \psi_{\lambda'_i- \lambda'_{i+1}}(q)
\]
for a partition $\lambda$.

Secondly, irreducible characters of $\gl_n(k)$ can be obtained by Green's theory.
Green shows how to obtain the irreducible characters, but we are concerned only with their degree.

\begin{lemma} [Green {\cite[Theorem 14]{MR72878}}]
  There is an explicit map that assigns to each $\nu: \cF \to \cP$ satisfying $|| \nu || = n$ an irreducible character $\chi_\nu$ of $\gl_n(k)$, such that 
  \[\deg \chi_{\nu} = \psi_n(q)\prod_{f\in \cF} b_{\nu(f)}(q^{\deg(f)}) \]
  where
  \[b_\lambda(q)= q^{\sum_i (i-1)\lambda_i} \prod_{i<j} (q^{\lambda_i-\lambda_j-i+j}-1)\prod_{r=1}^{l(\lambda)} \psi_{\lambda_r+l(\lambda)-r}(q)^{-1},\]
   and furthermore, this map is a bijection
\end{lemma}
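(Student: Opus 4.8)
The plan is to treat this as Green's theorem \cite{MR72878} and to verify its two assertions---the bijectivity of the parametrization and the degree formula---through Lusztig's Jordan decomposition of characters, commenting afterwards on Green's own route. Throughout one uses that $\gl_n$ has connected centre, so that the relevant centralizers are connected and the Jordan decomposition is multiplicity-free.

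First I would split $\Irr(\gl_n(k))$ into Lusztig series. Writing $G=\gl_n(k)$ and using that $G$ is self-dual, the irreducible characters are partitioned into series $\mathcal{E}(G,s)$ indexed by semisimple conjugacy classes $s$ of the dual group $G^\ast\cong\gl_n(k)$. A semisimple class is determined by its characteristic polynomial, hence by an assignment $f\mapsto m_f$ of a multiplicity to each $f\in\cF$ with $\sum_f m_f\deg(f)=n$, and its rational centralizer is the direct product
\[
  C_{G^\ast}(s)\cong \prod_{f\in\cF}\gl_{m_f}(\bF_{q^{\deg f}}).
\]
The crucial bookkeeping is that the Frobenius orbits indexing $\cF$ match, under a fixed bijection, the orbits indexing the semisimple classes on the dual side; this is exactly what lets conjugacy classes and irreducible characters be parametrized by the same data $\nu:\cF\to\cP$.

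Next I would use that characters in $\mathcal{E}(G,s)$ correspond bijectively to unipotent characters of $C_{G^\ast}(s)$. Since the centralizer is a direct product of general linear groups, its unipotent characters are exterior products of unipotent characters of the factors, and those of $\gl_{m}(\bF_{Q})$ are indexed by partitions of $m$. Hence a character of the series is precisely a choice of partition $\nu(f)\vdash m_f$ for each $f$, and collecting these recovers $\nu$ with $||\nu||=n$, giving the asserted bijection. For the degree, Lusztig's formula reads
\[
  \deg\chi_\nu=[G^\ast:C_{G^\ast}(s)]_{p'}\cdot\prod_{f\in\cF}\deg\psi_{\nu(f)},
\]
where $\deg\psi_{\nu(f)}$ is the unipotent degree of $\nu(f)$ in $\gl_{m_f}(\bF_{q^{\deg f}})$, the subscript $p'$ denotes the prime-to-$p$ part, and $[G^\ast:C_{G^\ast}(s)]_{p'}=\psi_n(q)/\prod_f\psi_{m_f}(q^{\deg f})$. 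Inserting the $q$-analogue hook-length formula
\[
  \deg\psi_{\lambda}=\frac{Q^{n(\lambda)}\,\psi_{m}(Q)}{\prod_{x\in\lambda}(Q^{h(x)}-1)},\qquad n(\lambda)=\sum_i(i-1)\lambda_i,
\]
with $Q=q^{\deg f}$ and $m=m_f$, every factor $\psi_{m_f}(q^{\deg f})$ cancels, leaving $\psi_n(q)\prod_f b_{\nu(f)}(q^{\deg f})$ provided one knows the purely combinatorial identity
\[
  b_\lambda(q)=\frac{q^{n(\lambda)}}{\prod_{x\in\lambda}(q^{h(x)}-1)}.
\]

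I expect the genuine obstacle to be this last identity rather than the representation theory: matching Green's $b_\lambda$ with the hook-length product amounts to rewriting the product over all hook lengths of $\lambda$ in terms of its first-column (beta) hook lengths $\lambda_i+l(\lambda)-i$, and it is here that the index ranges and the correction factors $q^{\lambda_i-\lambda_j-i+j}-1$ must be tracked with care. The representation-theoretic steps are standard for type $A$ with connected centre---the series decomposition, the product structure of semisimple centralizers, and the hook-length form of unipotent degrees are all classical once the two index sets are identified---so the degree computation is essentially forced. I would close by remarking that Green obtains $b_\lambda$ directly, without Deligne--Lusztig theory, from his inductive construction of the characters via basic characters, which is why the formula already appears in exactly this explicit shape in \cite{MR72878}.
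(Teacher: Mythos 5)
The paper does not prove this lemma at all: it is quoted from Green \cite{MR72878}, whose original argument constructs the characters inductively from ``basic characters'' via Hall polynomials, and the author explicitly uses only the degree formula. Your route through Lusztig's Jordan decomposition is therefore genuinely different, and it is sound for type $A$: the partition of $\Irr(\gl_n(k))$ into series $\mathcal{E}(G,s)$, the product structure $C_{G^\ast}(s)\cong\prod_f\gl_{m_f}(\bF_{q^{\deg f}})$, the multiplicity-free bijection with unipotent characters of the centralizer (connected centre), the degree formula with the $p'$-part of the index, and the $q$-hook-length formula for unipotent degrees of $\gl_m$ are all standard, and the bookkeeping with $\cF$ is as you describe (the exclusion of $f(t)=t$ is automatic since semisimple classes consist of invertible elements). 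The identity you flag as the remaining obstacle, $b_\lambda(q)=q^{n(\lambda)}\big/\prod_{x\in\lambda}(q^{h(x)}-1)$, is the classical first-column hook-length identity
\[
\prod_{x\in\lambda}\bigl(q^{h(x)}-1\bigr)=\frac{\prod_{i=1}^{l(\lambda)}\psi_{\lambda_i+l(\lambda)-i}(q)}{\prod_{i<j}\bigl(q^{\lambda_i-\lambda_j-i+j}-1\bigr)}
\]
(Macdonald, Ch.~I, \S 1, Ex.~1), so your computation does close; note only that to match it you must read the last product in the statement as running over $r=1,\dots,l(\lambda)$ rather than $r=1,\dots,\lambda_1$, which is what the paper's own use of the formula in the proof of Theorem \ref{invq} presupposes (with the printed range, $\lambda=(n)$ would produce negative indices and $\lambda=(1^n)$ would not yield the Steinberg degree $q^{\binom{n}{2}}$). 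As for what each approach buys: Green's is self-contained and elementary relative to its era, while yours is shorter given the modern machinery and makes the symmetry between the class and character parametrizations conceptually transparent, at the cost of invoking Deligne--Lusztig theory, which is far deeper than anything else used in this paper.
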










\section{Proof of main results}

\begin{theorem*}\textbf{\textup{\ref{invq}}}
  Let $k$ be a finite field with $q$ elements.
  Then we have
  \[h(\gl_n(k)) \in \bZ[q^{-1}].\]
\end{theorem*}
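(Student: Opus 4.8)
The plan is to prove the statement one prime at a time. Let $p$ be the characteristic of $k$, so that $q$ is a power of $p$ and $\bZ[q^{-1}]=\bZ[p^{-1}]$ inside $\mathbb{Q}$. Since $h(\gl_n(k))$ is a positive rational number, it lies in $\bZ[q^{-1}]$ if and only if $v_\ell(h(\gl_n(k)))\ge 0$ for every prime $\ell\neq p$, where $v_\ell$ is the $\ell$-adic valuation. I therefore fix such an $\ell$ and bound the valuation from below. Writing $\#K_\nu=|\gl_n(k)|/|C_\nu|$ and inserting $|\gl_n(k)|=q^{\binom{n}{2}}\psi_n(q)$ together with the formulas of Section~2, each factor of $h$ becomes
\[
\frac{\#K_\nu}{\deg\chi_\nu}=\frac{q^{\binom{n}{2}}}{\prod_{f\in\cF}a_{\nu(f)}(q^{\deg f})\,b_{\nu(f)}(q^{\deg f})},
\]
the two copies of $\psi_n(q)$ cancelling. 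Because $\ell\neq p$ we have $v_\ell(q)=0$, so the power of $q$ is invisible to $v_\ell$ and
\[
v_\ell\!\left(h(\gl_n(k))\right)=-\sum_{\nu}\sum_{f\in\cF}v_\ell\!\left((a_{\nu(f)}b_{\nu(f)})(q^{\deg f})\right).
\]
Thus it suffices to prove, for each individual factor, the purely local per-partition estimate $v_\ell\big((a_\lambda b_\lambda)(Q)\big)\le 0$ for every partition $\lambda$ and every prime power $Q=q^{\deg f}$ (still coprime to $\ell$).

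To establish this estimate I would express both valuations as sums of a single nonnegative function. Put $g(m)=v_\ell(Q^m-1)\ge 0$. Since the prefactor $q^{\binom{l(\lambda)}{2}+\sum\lambda'_i\lambda'_{i+1}}$ is an $\ell$-adic unit and $\psi_m(Q)=\prod_{j=1}^m(Q^j-1)$, the formula for $a_\lambda$ gives
\[
v_\ell(a_\lambda(Q))=\sum_{i}\sum_{j=1}^{\lambda'_i-\lambda'_{i+1}}g(j)=\sum_{m\in A}g(m),\qquad A:=\bigsqcup_i\{1,\dots,\lambda'_i-\lambda'_{i+1}\}.
\]
For $b_\lambda$ I would pass to the classical $q$-hook-length form $b_\lambda(Q)=Q^{\sum(i-1)\lambda_i}\prod_{x\in\lambda}(Q^{h_x}-1)^{-1}$, where $x$ runs over the cells of $\lambda$ and $h_x$ is the hook length (arm plus leg plus one); this yields $-v_\ell(b_\lambda(Q))=\sum_{x\in\lambda}g(h_x)=\sum_{m\in H}g(m)$, where $H$ is the multiset of all hook lengths. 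The desired inequality is then exactly $\sum_{m\in A}g(m)\le\sum_{m\in H}g(m)$, and since $g\ge 0$ it reduces to the combinatorial claim that $A$ is a sub-multiset of $H$.

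I would prove $A\subseteq H$ by realizing the entries of $A$ as genuine hook lengths. Group the rows of $\lambda$ into maximal blocks of equal length; a block of $m$ rows of common length $\mu$ consists of rightmost cells $(r,\mu)$ having arm length $0$ and, read from the bottom upward, leg lengths $0,1,\dots,m-1$, hence hook lengths $1,2,\dots,m$. As $\lambda'_i-\lambda'_{i+1}$ is precisely the number of rows of length $i$, the rightmost cells of the rows realize exactly the multiset $A$, and distinct blocks occupy distinct cells; therefore $A\subseteq H$. Combined with $g\ge 0$ this gives the per-partition estimate, and summing over all $\nu$ and $f$ produces $v_\ell(h)\ge 0$ for every $\ell\neq p$, which is the theorem. (In fact each factor $\#K_\nu/\deg\chi_\nu$ already lies in $\bZ[q^{-1}]$.)

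The step I expect to be the main obstacle is the passage to hook lengths and the combinatorial identification: verifying that Green's $b_\lambda$ is the $q$-hook-length product, so that the $\ell$-content of the denominator is governed by the multiset $H$, and then pinning down $A$ as the hook lengths of the rightmost cells. Once the bookkeeping is arranged so that both $v_\ell(a_\lambda)$ and $-v_\ell(b_\lambda)$ are sums of the \emph{same} function $g$ over explicit multisets of positive integers, the inequality is forced by nonnegativity of $g$ and the containment $A\subseteq H$; the surrounding reductions (the prime-by-prime principle, the cancellation of $\psi_n$, and discarding powers of $q$) are routine.
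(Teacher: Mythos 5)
Your proposal is correct, but it follows a genuinely different route from the paper. The paper fixes a class, writes $q^{\Psi(\lambda)}(a_\lambda b_\lambda)^{-1}$ as an explicit ratio $\Phi(\lambda)$ of products of $\psi$'s and factors $(q^{m}-1)$, and proves $\Phi(\lambda)\in\bZ$ by induction on the length of the partition: passing from $\lambda$ to $\lambda^+=(x,\lambda_1,\dots,\lambda_l)$ multiplies $\Phi$ by $\psi_{x+l(\lambda)}(q)$ divided by factors $(q^{\lambda'_x+1}-1)\prod_i(q^{x-\lambda(i)}-1)$ whose exponents are shown to be pairwise distinct and at most $x+l(\lambda)$, so the quotient is a polynomial. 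You instead argue prime by prime: for $\ell\neq p$ you convert $v_\ell(a_\lambda(Q))$ into $\sum_{m\in A}v_\ell(Q^m-1)$ with $A=\bigsqcup_i\{1,\dots,\lambda'_i-\lambda'_{i+1}\}$, rewrite $b_\lambda$ via the $q$-hook-length form so that $-v_\ell(b_\lambda(Q))=\sum_{m\in H}v_\ell(Q^m-1)$ over the multiset $H$ of hook lengths, and conclude from the containment $A\subseteq H$ (realized by the rightmost cells of the rows, whose hook lengths in a block of $d_i$ equal rows are exactly $1,\dots,d_i$). Both arguments are sound; I checked your multiset identification and it is correct. What your approach buys is a non-inductive, conceptually transparent reduction to a single containment of multisets; what it costs is the need to import (and justify against Green's formula for $b_\lambda$) the classical identity $\prod_{x\in\lambda}(q^{h_x}-1)=\prod_r\psi_{\lambda_r+l-r}(q)\big/\prod_{i<j}(q^{(\lambda_i-i)-(\lambda_j-j)}-1)$, i.e.\ that the hook lengths in row $i$ are $\{1,\dots,\mu_i\}\setminus\{\mu_i-\mu_j\}_{j>i}$ with $\mu_i=\lambda_i+l-i$ --- a standard fact, but one the paper's self-contained induction avoids. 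Note also that the paper's argument yields the slightly stronger conclusion that each $q^{\Psi(\lambda)}(a_\lambda b_\lambda)^{-1}$ is a polynomial in $q$ with integer coefficients, whereas your valuation-theoretic version only gives membership in $\bZ[q^{-1}]$ per class, which is all the theorem requires.
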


\begin{proof}[Proof of Theorem \ref{invq}]

We fix $\nu: \cF \to \cP$ satisfying $||\nu|| = n$. 
By \S 2, the quotient of the size of $K_{\nu}$ and the degree of $\chi_{\nu}$ is
\begin{equation}\label{nu}
  \frac{|\gl_n(k)|}{|C_\nu|\deg \chi_{\nu}}
  =\frac{q^{\binom{n}{2}}}{\prod_{f\in \cF} a_{\nu(f)}(q^{\deg(f)}) b_{\nu(f)}(q^{\deg(f)})}.
\end{equation}

We write $\lambda(r)=\lambda_r -r$ for a partition $\lambda$. Then 
\begin{equation}\label{phi}
  (a_{\lambda}(q)b_{\lambda}(q))^{-1}
  = q^{-\Psi(\lambda)}\frac{\prod \psi_{\lambda(r)+l(\lambda)}(q)}{\prod \psi_{\lambda'_s- \lambda'_{s+1}}(q) \prod_{i<j} (q^{\lambda(i)-\lambda(j)}-1)}
\end{equation}
where 
\[\Psi(\lambda)= \binom{l(\lambda)}{2}+ \sum (i-1)\lambda_i +\sum \lambda'_j \lambda'_{j+1}.\]

It suffices to show that 
\[\Phi(\lambda):=\frac{\prod \psi_{\lambda(r)+l(\lambda)}(q)}{\prod \psi_{\lambda'_s- \lambda'_{s+1}}(q) \prod_{i<j} (q^{\lambda(i)-\lambda(j)}-1)}\]
 is an integer for each partition $\lambda$.
We will prove this using induction on the length of a partition.

\noindent
\textbf{First case}: Length 1

In this case, the conjugate partition is $(1^m)$, so all terms in the first product of the denominator of (\ref{phi}) are $\psi_0(q)=1$.
The second product in the denominator of (\ref{phi}) is empty.
Thus $\Phi(m)=\psi_{m}(q)$ is an integer.

\noindent
\textbf{Second case}: Length at least 2

Let $\lambda$ be a non empty partition and $\lambda^+ = (x, \lambda_1, \cdots, \lambda_l)$ for $x \geq \lambda_1$.
We can easily obtain the following:
\begin{align*}
  \lambda^{+}(i)+l(\lambda^{+}) &= \lambda(i-1)+l(\lambda) &(i > 1)\\
  \lambda^{+}(1)+l(\lambda^{+}) &= x+l(\lambda) \\
  (\lambda^{+})'_i - (\lambda^{+})'_{i+1} &= \lambda'_i- \lambda'_{i+1} &(i<\lambda_1)\\
  (\lambda^{+})'_x - (\lambda^{+})'_{x+1} &= \lambda'_x +1 &(\lambda_1=x)\\
  (\lambda^{+})'_x - (\lambda^{+})'_{x+1} &= 1 &(\lambda_1<x)\\
  \lambda^{+}(i)-\lambda^{+}(j) &= \lambda(i-1)-\lambda(j-1) &(1<i<j)\\
  \lambda^{+}(1)-\lambda^{+}(i) &= x-\lambda(i-1) &(i > 1)
\end{align*}

By these formulas, we have
\begin{align*}
  \Phi(\lambda^+)
  &=\frac{\prod \psi_{\lambda^{+}(r)+l(\lambda^{+})}(q)}{\prod \psi_{(\lambda^{+})'_s - (\lambda^{+})'_{s+1}}(q) \prod_{i<j} (q^{\lambda^{+}(i)-\lambda^{+}(j)}-1)}\\
  &=
  \begin{dcases}
    \Phi(\lambda)\frac{\psi_{x+l(\lambda)}(q)}{(q^{\lambda'_x+1}-1)\prod (q^{x-\lambda(i)}-1)} & (x=\lambda_1) \\
    \Phi(\lambda)\frac{\psi_{x+l(\lambda)}(q)}{\prod (q^{x-\lambda(i)}-1)} & (x>\lambda_1)
  \end{dcases}.
\end{align*}

The integers $x-\lambda(1),\cdots,x-\lambda(l)$ and $\lambda'_x+1$ are distinct and less than or equal to $x+l(\lambda)$.
Indeed, it is clear that 
\[0<x-\lambda(1)<\cdots<x-\lambda(l)<x+l(\lambda).\]
If $x=\lambda_1=\cdots=\lambda_i>\lambda_{i+1}$, then 
\[x-\lambda(i)=i=\lambda'_x<\lambda'_x +1<i+2\leq x-\lambda(i+1),\]
and if $x> \lambda_1$, then 
\[\lambda'_x + 1 = 1 < 1 + (x - \lambda_1) = x-\lambda(1).\]
Therefore $\psi_{x+l(\lambda)}(q)$ is divisible by $(q^{\lambda'_x+1}-1)\prod (q^{x-\lambda(i)}-1)$.
By the induction hypothesis, $\Phi(\lambda^+)$ is an integer.
\end{proof}

It remains to consider the ``$q$-power part'' of $h(\gl_n(k))$.
To simplify notation, we define a valuation $v_q:\bZ[q^{-1}] \to \mathbb{Q} \cup \{\infty\}$;
\[v_q(q^m r)=m \quad (q, r)=1.\]
Then, the value of (\ref{nu}) is 
\[
  \Omega(\nu):=v_q\left(\frac{|\gl_n(k)|}{|C_\nu|\deg \chi_{\nu}}\right)= \binom{n}{2}-\sum_{f\in \cF} \Psi(\nu(f))\deg(f)
\]
and of $h(\gl_n(k))$ is
\[
  v_q(h(\gl_n(k)))=\sum_{||\nu||=n} \Omega(\nu).
\]
By Theorem \ref{invq}, $h(\gl_n(k))\in\bZ$ is equivalent to $v_q(h(\gl_n(k)))\geq 0$.

We rewrite $v_q(h(\gl_n(k)))$ as a polynomial in $q$.
To do this, we introduce an equivalence relation on $\nu$'s.
We define two maps $\nu,\nu' :\cF \to \cP$ to be equivalent if and only if there exists a bijection $\tau$ on $\cF$ preserving degree and satisfying $\nu\circ \tau=\nu'$, and write the equivalence class including $\nu$ as $[\nu]$.
Then $||\nu||=||\nu'||$ and $\Omega(\nu)=\Omega(\nu')$ holds.
Thus the size of $[\nu]$ is at most the number of maps whose value of $\Omega$ equals $\Omega(\nu)$.
By definition, equivalence classes are characterized by the terms in $||\nu||$ satisfying $\nu(f)\neq ()$.

We recall a formula for the number of monic irreducible polynomials (cf. the last equation of \cite[section 1]{MR72878}).
Let $N(d)$ be the number of such polynomials of degree $d$ in $\cF$.
$N(1)=q-1$ and for $d \geq 2$,
\[
  N(d)=\frac{1}{d} \sum_{i|d}\mu\left(\frac{d}{i}\right)q^i,
\]
where $\mu$ is the M\"{o}bius function defined by the following:
\begin{align*}
  \mu(n) =
  \begin{cases}
    1  & n=1 \\
    (-1)^m  & n = \text{the product of}~ m ~\text{different primes}\\
    0 & \text{otherwise}\\
  \end{cases}
\end{align*}

Therefore, $N(d)$ is a polynomial in $q$ of degree $d$.

We can present each class size as a polynomial in $q$, because it is a combination of the number of monic irreducible polynomials.

\textbf{Example.}
Table \ref{n2} gives the data needed to calculate $v_q(h(\gl_2(k)))$.
If $\nu$ maps the polynomial $f(t)=t-1$ to the partition $(2)$ and other polynomials to the empty partition $()$, then
\begin{itemize}
  \item $||\nu||=|(2)|\times 1=2$,
  \item $\Omega(\nu')=\Omega(\nu)=0$ when $\nu$ and $\nu'$ are equivalent, and
  \item the size of $[\nu]$ equals $N(1)=q-1$, the number of degree 1 polynomials in $\cF$.
\end{itemize}
Similarly, we have
\begin{align*}
  \sum_{||\nu||=2} \Omega(\nu) &= 0\times N(1) + (-1) \times N(1) + 1 \times N(2) + 1 \times \binom{N(1)}{2} \\
  &=(q-1)(q-2). 
\end{align*}

\begin{table}[htb]
  \centering
  \caption{$n=2$ ($4$ classes)}
  \begin{tabular}{ccc} \label{n2}
    $[\nu]$ & $\Omega(\nu)$ & $\#[\nu]$ \\ \hline
    $|(2)|\times 1$ & $0$ & $N(1)$ \\ \hline
    $|(1,1)|\times 1$ & $-1$ & $N(1)$ \\ \hline
    $|(1)|\times 2$ & $1$ & $N(2)$ \\ \hline
    $|(1)|\times 1+|(1)|\times 1$ & $1$ & $\binom{N(1)}{2}$ \\
  \end{tabular}
\end{table}

Thus, for $q\geq 2$, $v_q(h(\gl_2(k)))$ is non-negative, so $h(\gl_2(k))$ is an integer.

\begin{lemma}
  $v_q(h(\gl_n(k)))$ is a polynomial in $q$ of degree $n$.
\end{lemma}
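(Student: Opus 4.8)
The plan is to expand
\[
v_q(h(\gl_n(k)))=\sum_{||\nu||=n}\Omega(\nu)
\]
along the equivalence classes introduced above. Since $\Omega$ takes a single integer value on each class $[\nu]$, and this value is manifestly independent of $q$ (it is $\binom{n}{2}-\sum_f\Psi(\nu(f))\deg f$, with $\Psi$ a purely combinatorial quantity), we may write $v_q(h(\gl_n(k)))=\sum_{[\nu]}\Omega(\nu)\,\#[\nu]$, the sum ranging over equivalence classes with $||\nu||=n$. A class is recorded by the numbers $m_{d,\lambda}$ of degree-$d$ polynomials sent to a fixed nonempty partition $\lambda$, and its size is the product over $d$ of the multinomial coefficient choosing, among the $N(d)$ monic irreducibles of degree $d$, which are assigned to each $\lambda$:
\[
\#[\nu]=\prod_{d\ge1}\frac{N(d)!}{\bigl(N(d)-M_d\bigr)!\,\prod_\lambda m_{d,\lambda}!},\qquad M_d:=\sum_\lambda m_{d,\lambda}.
\]
First I would observe that each factor is a polynomial in $N(d)$ of degree $M_d$, and since $N(d)$ is a polynomial in $q$ of degree $d$, each $\#[\nu]$ is a polynomial in $q$ of degree $\sum_d d\,M_d$; hence $v_q(h(\gl_n(k)))$ is a polynomial in $q$.

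Next I would bound this degree. Because every nonempty partition has size at least $1$,
\[
\sum_d d\,M_d=\sum_{d,\lambda}d\,m_{d,\lambda}\le\sum_{d,\lambda}d\,m_{d,\lambda}|\lambda|=||\nu||=n,
\]
with equality precisely when every partition occurring in the class equals $(1)$. Thus $\deg_q\#[\nu]\le n$, so $\deg_q v_q(h(\gl_n(k)))\le n$, and the only classes that can contribute to the coefficient of $q^n$ are the ``all-$(1)$'' classes, i.e.\ those with $\nu(f)\in\{(),(1)\}$ for every $f$.

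It remains to show this top coefficient does not vanish, which is the crux. On an all-$(1)$ class one checks $\Psi((1))=0$, so $\Omega(\nu)=\binom{n}{2}$ is the \emph{same} constant for all such classes. Writing $c_d$ for the number of degree-$d$ polynomials sent to $(1)$, these classes are indexed by $(c_d)$ with $\sum_d d\,c_d=n$, and $\#[\nu]=\prod_d\binom{N(d)}{c_d}$. Using that $N(d)$ has leading coefficient $1/d$, the coefficient of $q^n$ in $\sum_{\text{all-}(1)}\#[\nu]$ is $\sum_{\sum_d d c_d=n}\prod_d\frac{1}{c_d!\,d^{c_d}}$, which by the exponential formula equals $[x^n]\exp\bigl(\sum_{d\ge1}x^d/d\bigr)=[x^n](1-x)^{-1}=1$. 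Therefore the coefficient of $q^n$ in $v_q(h(\gl_n(k)))$ equals $\binom{n}{2}$, which is nonzero for $n\ge 2$ (the case $n=1$ is degenerate, as $\gl_1(k)$ is abelian), so the degree is exactly $n$. I expect the main obstacle to be exactly this last step: ruling out cancellation at top degree, which hinges both on the vanishing $\Psi((1))=0$ that makes $\Omega$ a single constant on the contributing classes, and on the generating-function identity that forces the leading coefficient to be $1$ rather than $0$.
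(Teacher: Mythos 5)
Your proof is correct and follows the same basic route as the paper: decompose $\sum_{||\nu||=n}\Omega(\nu)$ over the equivalence classes $[\nu]$, use that $\Omega$ is constant on each class, and show $\deg_q \#[\nu]=\sum_{\nu(f)\neq ()}\deg f\leq ||\nu||$ with equality exactly when $\nu(\cF)\subseteq\{(1),()\}$. The one place you diverge is the final non-cancellation step, where you are actually more complete than the paper: the paper's proof of the lemma simply concludes the degree is $n$, and the fact that the $q^n$-coefficients of the contributing classes cannot cancel is only made explicit later, in the proof of Theorem 1.3, via positivity of the leading coefficients $c_{[\nu]}$ together with $\Omega(\nu)=\binom{n}{2}$ on all such classes. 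You instead pin the leading coefficient down exactly as $\binom{n}{2}\cdot[x^n]\exp\bigl(\sum_{d\geq 1}x^d/d\bigr)=\binom{n}{2}$, which checks out against Table 3 (leading coefficients $1,3,6,10$ for $n=2,\dots,5$); mere positivity would have sufficed, so the exponential-formula identity is a sharper bonus rather than a necessity. Your remark that $n=1$ is degenerate (the coefficient $\binom{1}{2}=0$, and indeed $v_q(h(\gl_1(k)))=0$ has degree $0$, not $1$) flags a genuine edge case that the paper's statement also silently excludes.
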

\begin{proof}[Proof of Lemma 3.1]
  In the same way as the example, we can show that $\sum \Omega(\nu)$ is a polynomial in $q$ for any $n$.
  For any class $[\nu]$, we have
  \begin{align*}
    \deg \left(\sum_{\nu' \in [\nu]} \Omega(\nu')\right) &= \deg(\#[\nu])\\
    &= \sum_{\nu(f)\neq ()} \deg(N(\deg(f)))\\
    &= \sum_{\nu(f)\neq ()} \deg(f)\\
    &\leq ||\nu||.
  \end{align*}
  The equality holds if and only if $\nu(\cF)= \{(1),()\}$.
  Hence
  \[
    \deg\left(\sum_{||\nu||=n} \Omega(\nu)\right)= n
  \]
\end{proof}

Note that $\Omega(\nu)$ is negative when $||\nu||$ is $|(1,1)|\times 1$.
There are some negative values for general $n$ (See Appendix).
In other words, the ``$q$-power part'' of (\ref{nu}) is not always an integer unlike $\Phi(\lambda)$.
However, if $q$ is large enough, we can ignore negative values.

\begin{theorem*}\textbf{\textup{\ref{largeq}}}
  For any fixed $n$, there exists $q_n$ such that if $q>q_n$, then $h(\gl_n(k)) \in \bZ$.
\end{theorem*}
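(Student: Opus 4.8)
The plan is to combine Theorem \ref{invq} with Lemma 3.1 to reduce the claim to a sign statement for large $q$. Since $h(\gl_n(k)) \in \bZ[q^{-1}]$, membership in $\bZ$ is equivalent to $v_q(h(\gl_n(k))) \geq 0$. By Lemma 3.1, the quantity $v_q(h(\gl_n(k))) = \sum_{||\nu||=n} \Omega(\nu)$ is a polynomial in $q$ of degree $n$. A real polynomial of positive degree with positive leading coefficient takes positive values for all sufficiently large arguments, so it suffices to prove that the leading coefficient of this polynomial is positive and then to take $q_n$ larger than its largest real root.

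First I would isolate the top-degree contributions. As in the proof of Lemma 3.1, the only equivalence classes $[\nu]$ for which $\#[\nu]$ attains degree $n$ are those with $\nu(\cF) = \{(1),()\}$, that is, those $\nu$ sending finitely many distinct irreducible polynomials (of total degree $n$) to the partition $(1)$ and all others to $()$. For such $\nu$ I would compute $\Omega(\nu)$ directly: since $\Psi((1)) = \binom{1}{2} + 0 + 0 = 0$, every $\nu$ of this type satisfies $\Omega(\nu) = \binom{n}{2}$, independent of the chosen polynomials.

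Next I would verify that each contributing count has a positive leading term, so that no cancellation occurs at the top degree. Writing $m_d$ for the number of degree-$d$ polynomials sent to $(1)$, we have $\#[\nu] = \prod_d \binom{N(d)}{m_d}$ with $\sum_d d\,m_d = n$, and since $N(d)$ has leading coefficient $1/d > 0$, each such $\#[\nu]$ is a polynomial of degree $n$ with positive leading coefficient. Hence the degree-$n$ coefficient of $v_q(h(\gl_n(k)))$ is $\binom{n}{2}$ times a sum of positive numbers, which is strictly positive for $n \geq 2$. (The generating-function identity $\prod_d \exp(x^d/d) = (1-x)^{-1}$ shows this sum equals $1$, so the leading coefficient is exactly $\binom{n}{2}$, consistent with $q^2 - 3q + 2$ for $n = 2$.) The case $n = 1$ is trivial, as $\gl_1(k)$ is abelian and $h = 1$.

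With the leading coefficient shown positive, I would conclude as follows: let $q_n$ be any real number exceeding all real roots of the degree-$n$ polynomial $v_q(h(\gl_n(k)))$. Then for every prime power $q > q_n$ we have $v_q(h(\gl_n(k))) > 0 \geq 0$, so $h(\gl_n(k)) \in \bZ$ by Theorem \ref{invq}. The hard part will be the bookkeeping in the middle two steps, namely confirming that every top-degree term carries the same positive value $\Omega(\nu) = \binom{n}{2}$ and that the leading coefficients of the counts do not cancel; once this positivity is in hand, the eventual-positivity argument is routine.
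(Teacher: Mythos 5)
Your proposal is correct and follows essentially the same route as the paper: reduce to showing the leading coefficient of the degree-$n$ polynomial $v_q(h(\gl_n(k)))=\sum_{||\nu||=n}\Omega(\nu)$ is positive, note that only classes with $\nu(\cF)=\{(1),()\}$ contribute at top degree, and observe that each such class has $\Omega(\nu)=\binom{n}{2}$ and a count with positive leading coefficient, so no cancellation occurs. Your extra generating-function computation pinning the leading coefficient down to exactly $\binom{n}{2}$ goes slightly beyond what the paper records, but the argument is the same.
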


\begin{proof}[Proof of Theorem \ref{largeq}]
  It suffices to show that the leading coefficient of $\sum \Omega(\nu)$ is positive for any $n$.
  Let $c_{[\nu]}$ be the leading coefficient of the size of $[\nu]$.
  Since $c_{[\nu]}$ is a product of leading coefficients of $N(d)$s,
  the leading coefficient of $\sum \Omega(\nu)$ is
  \[\sum c_{[\nu]}\Omega(\nu)=\binom{||\nu||}{2}\sum c_{[\nu]}>0,\]
  where the sum is over equivalence classes $[\nu]$ satisfying $\nu(\cF)= \{(1),()\}$.
\end{proof}

\section{finite unitary group}
For $\alpha=(a_{ij})\in \gl_n(q^2)$, we write $\alpha^* =(a_{ji}^q)\in \gl_n(q^2)$.
Then the finite unitary group is 
\[\unitary_n(q)= \{\alpha\in \gl_n(q^2) \mid \alpha\alpha^*= I \},\]
where $I$ is the unit of $\gl_n(q^2)$ and the order of $\unitary_n(q)$ is
\[|\unitary_n(q)|=(-1)^n q^{\binom{n}{2}}\psi_n(-q).\]

For a monic polynomial \[f(t)= t^d+a_{d-1}t^{d-1}+\cdots+a_0\] over $\bF_{q^2}$ with $a_0\neq 0$, we denote \[\tilde{f}(t)= \bar{a}_0^{-1}(\bar{a}_0t^d+a_{1}t^{d-1}+\cdots+1).\]
A monic polynomial $f(t)$ is U-irreducible if and only if $f(t)$ is irreducible and $f(t)=\tilde{f}(t)$, or $f(t)=g(t)\tilde{g}(t)$, where $g(t)$ is irreducible and $g(t)\neq\tilde{g}(t)$.
Conjugacy classes of $\unitary_n(k)$ are parametrized by maps from the set $\cF_U$ of monic U-irreducible polynomials excluding $f(t)=t$ to the set $\cP$ of partitions.
By a theorem of Wall \cite{wall}, there exists a bijection from the set of maps $\nu: \cF_U \to \cP$ satisfying $||\nu|| =n$ to the conjugacy classes of $\unitary_n(q)$.

\begin{lemma}
  The size of the conjugacy class $c_\nu$ corresponding to a map $\nu: \cF_U \to \cP$ satisfying $||\nu|| =n$ is
  \[\frac{|\unitary_n(q)|}{(-1)^n a_U(c_\nu)},\]
  where
  \[a_U(c_\nu)= \prod_{f\in \cF_U}a_{\nu(f)}((-q)^{\deg(f)}).\]
\end{lemma}

V. Ennola defines, for each map $\nu: \cF_U \to \cP$, an ``irreducible C-function $\chi_\nu$'' and shows the irreducible C-functions form an orthonormal basis for the vector spabe of class functions on $\unitary_n(q)$ \cite[Theorem 1]{enn}.
N. Kawanaka \cite{ka} proves Ennola's conjecture, that the irreducible C-functions are the irreducible characters of $\unitary_n(q)$.

\begin{lemma}[Ennola duality \cites{enn,ka}]
  The degree of the irreducible character $\chi_{\nu}$ corresponding to a map $\nu: \cF_U \to \cP$ satisfying $||\nu|| =n$ is 
  \[|\psi_n(-q)\prod_{f\in \cF} b_{\nu(f)}((-q)^{\deg(f)})|\]
\end{lemma}

Ennola duality is ``the simple formal change that $q$ is everywhere replaced by $-q$''.
Thus we can apply the proof of theorem \ref{invq} to the finite unitary group.

The number $N_U(d)$ of distinct U-irreducible polynomials of degree $d$ is given in \cite[Theorem 4]{enn}.

\begin{lemma}[Ennola {\cite[Theorem 4]{enn}}]\label{numU}
 \[N_U(d)=\frac{1}{d} (N(d)-c_d),\]
where
\begin{align*}
  c_d =
  \begin{cases}
    -1  & d=1 \\
    2  & d=2\\
    0 & \text{otherwise}\\
  \end{cases}
\end{align*}
\end{lemma}

By Lamma \ref{numU} and the proof of theorem \ref{largeq}, we have theorem \ref{uni}.

\begin{theorem*}\textbf{\textup{\ref{uni}}}
  Let $q$ be a power of a prime. Then $h(\unitary_n(q)) \in \bZ[q^{-1}].$
  Moreover, for any fixed $n$, there exists an integer $q_n$ such that if $q>q_n$, then $h(\unitary_n) \in \bZ$.
\end{theorem*}

\section{Some remarks}
Some calculation results are in the appendix. 
We have calculated $\Omega(\nu)$ and $v_q(h(\gl_n(k)))$ when $n=3,4,5$.
Since $v_q(h(\gl_n(k)))$ is a polynomial in $q$ and its leading coefficient is positive, $h(\gl_n(k))$ is an integer for any $q\geq 2$ if and only if the maximum real root of $v_q(h(\gl_n(k)))$ is less than or equal to $2$.
By Table \ref{vq}, $h(\gl_n(k))\in \bZ$ for $2\leq n \leq 5$.

\begin{table}[htb]
  \centering
  \caption{$v_q(h(\gl_n(k)))$ and the maximum real root}
  \begin{tabular}{ccc}\label{vq}
    $n$ & $v_q(h(\gl_n(k)))$ & max root \\ \hline
    $2$ & $(q-1)(q-2)$ & $2$ \\ \hline
    $3$ & $3(q-1)(q^2-2)$ & $\sqrt{2}$ \\ \hline
    $4$ & $3(q-1)(2q^3+q^2-2q+3)$ & $1$ \\ \hline
    $5$ & $(q-1)(10q^4+7q^3-2q^2-15q-10)$ & $1.174\ldots$ \\
  \end{tabular}
\end{table}

Apparently, $\Omega(\nu)$ is positive for almost all $\nu$ such that $||\nu||=2,3,4$ and $5$.
I think there are few negative values for $n\geq 6$, so I propose the following conjecture.

\begin{conjecture}
  For $n\geq 6$, the maximum real root of $v_q(h(\gl_n(k)))$ is less than $2$.
\end{conjecture}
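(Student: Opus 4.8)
The plan is to first convert $v_q(h(\gl_n(k)))$ into a transparent difference of two polynomials and then reduce the conjecture to a single inequality. Writing $c_n(q)$ for the number of conjugacy classes of $\gl_n(k)$ and $\bar\Psi(\nu)=\sum_f\Psi(\nu(f))\deg(f)$, the definition of $\Omega$ gives at once
\[
v_q(h(\gl_n(k)))=\sum_{||\nu||=n}\Omega(\nu)=\binom n2\,c_n(q)-S_n(q),\qquad S_n(q):=\sum_{||\nu||=n}\bar\Psi(\nu).
\]
Both pieces are computable by generating functions: with $P(y)=\sum_{\lambda\in\cP}\Psi(\lambda)y^{|\lambda|}$, $Q(y)=\sum_{\lambda\in\cP}y^{|\lambda|}$ and $C(x)=\sum_n c_n(q)x^n=\prod_{i\ge1}\frac{1-x^i}{1-qx^i}$, one has
\[
\sum_{n}S_n(q)\,x^n=C(x)\sum_{d\ge1}d\,N(d)\,\frac{P(x^d)}{Q(x^d)},\qquad \sum_n \binom n2 c_n(q)\,x^n=\tfrac{x^2}{2}\,C''(x).
\]
The crucial structural point is that $\Psi((1))=0$, so only class-types using at least one partition of size $\ge2$ contribute to $S_n$; since the multiplicity of such a type has $q$-degree $\sum_{\nu(f)\neq()}\deg(f)$, strictly smaller than its weight $\sum|\nu(f)|\deg(f)=n$, I obtain $\deg_q S_n\le n-1$, whereas $\deg_q\big(\binom n2 c_n\big)=n$ with leading coefficient $\binom n2$ (matching Table \ref{vq}).

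This decomposition reduces the conjecture to a clean statement: it suffices to show
\[
\binom n2\,c_n(q)>S_n(q)\qquad\text{for all real }q\ge2\text{ and all }n\ge6,
\]
equivalently that the average of $\bar\Psi$ over the conjugacy classes is strictly below $\binom n2=v_q(|\gl_n(k)|)$. The degree gap $\deg_q S_n\le n-1<n$ already forces this for $q$ large; the entire content is the uniform behaviour down to $q=2$. Note the boundary is genuine: at $n=2$ one has $\binom22 c_2(2)=S_2(2)=3$, which is exactly why the threshold is $2$, and the task for $n\ge6$ is to prove the inequality is strict at and beyond $q=2$.

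To establish the inequality I would bound the two sides uniformly in $n$. For the lower bound I would use $c_n(q)\ge G_n(q):=[x^n]\prod_{d\ge1}(1+x^d)^{N(d)}$, the number of class-types all of whose nonempty parts equal $(1)$ — for which $\Omega=\binom n2$ — whose leading term is $q^n$. For the upper bound I would read off from the series $\sum_n S_n(q)x^n=C(x)\sum_d dN(d)P(x^d)/Q(x^d)$ that $S_n$ has $q$-degree $\le n-1$, and track which bad types actually make $\bar\Psi$ large: the dangerous ones are the near-column partitions $(1^m)$, with $\Psi((1^m))=m(m-1)$, and these are attached to multiplicities of small $q$-degree. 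A robust way to finish, and the form I would aim for, is to prove the stronger assertion that the shifted polynomial $v_q(h(\gl_n(k)))\big|_{q\mapsto q+2}$ has nonnegative coefficients and positive constant term, which rules out any real root $\ge2$ outright; I would try to read this nonnegativity off a sign-definite rearrangement of $\tfrac{x^2}{2}C''(x)-C(x)\sum_d dN(d)P(x^d)/Q(x^d)$ after the substitution $q\mapsto q+2$. In practice I expect to verify $6\le n\le n_0$ by extending the appendix computations and to treat $n>n_0$ by the asymptotic domination, where $n_0$ is whatever finite bound the explicit estimates demand.

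The hard part will be making the estimate uniform in $n$ precisely at $q=2$, where the polynomials are smallest relative to their leading terms. Over $\bF_2$ the counts $N(d)$ are small, so the lower bound on $c_n(2)$ from the good types is weak, while the accumulated weight of the column-type contributions to $S_n(2)$ is not negligible; I must show that the number of ``$(1)$-only'' classes still outweighs the total $\bar\Psi$ of the bad classes by the factor $\binom n2$. Equivalently, the shifted-polynomial approach hinges on a positivity that is \emph{false} for $n=2$ and only becomes true for larger $n$, so any successful argument must detect exactly where this transition occurs. Capturing that $n$-dependence is the main obstacle.
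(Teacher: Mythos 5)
This statement is one of the paper's open conjectures: the paper offers no proof, only the computational evidence of the appendix tables (which stop at $n=5$, below the range $n\geq 6$ the conjecture concerns), so your argument has to stand entirely on its own. Its structural half does stand: the decomposition $v_q(h(\gl_n(k)))=\binom{n}{2}c_n(q)-S_n(q)$ follows immediately from the definition of $\Omega$, your generating functions are correct (the identity $C(x)=\prod_{i\geq 1}\frac{1-x^i}{1-qx^i}$ follows from $\prod_{d}(1-x^d)^{-N(d)}=\frac{1-x}{1-qx}$, and the formula for $\sum_n S_n(q)x^n$ is the standard one-site weighting), and the observation that $\Psi((1))=0$ forces $\deg_q S_n\leq n-1$ while $\binom{n}{2}c_n(q)$ has degree $n$ and leading coefficient $\binom{n}{2}$ is right. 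But this only recovers what the paper already proves in Lemma 3.1 and Theorem \ref{largeq}: the polynomial is eventually positive, i.e.\ the roots are bounded for each $n$. It says nothing about the interval $[2,q_n]$, which is the entire content of the conjecture.

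The genuine gap is that the central inequality
\[
\binom{n}{2}\,c_n(q) > S_n(q) \qquad (q\geq 2,\ n\geq 6),
\]
which you correctly identify as equivalent to the statement, is never established. Everything after the reduction is conditional or deferred: the degree bound $\deg_q S_n\leq n-1$ gives no control of $S_n(2)$ without bounds on its coefficients, which grow with $n$ through the terms $\Psi((1^m))=m(m-1)$, so the lower bound $c_n(q)\geq G_n(q)$ and the upper degree bound never interact to produce an inequality at $q=2$; the ``stronger assertion'' that $v_q(h(\gl_n(k)))\big|_{q\mapsto q+2}$ has nonnegative coefficients and positive constant term is itself an unproven (and delicate) conjecture, one you admit fails at $n=2$ and whose $n$-dependence you say you cannot capture; and the closing plan -- finite verification up to some $n_0$ plus asymptotics beyond -- names no effective $n_0$ and proves no estimate uniform in $q\geq 2$. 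Your final paragraph concedes that the behaviour at $q=2$ uniformly in $n$ is the main obstacle; that obstacle is precisely the conjecture. What you have is a correct and potentially useful reformulation plus a research plan, not a proof.
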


It seems that when $\Omega (\nu)$ is negative, $\nu$ almost always maps a single degree 1 polynomial to a partition of $n$ and the others to $()$ - see Tables \ref{n3}-\ref{n5}.
Summing $\Omega (\nu)$ over such $\nu$, the partial sum of $v_q(h(\gl_n(k)))$ is
\begin{align*}
  \sum \Omega(\nu) &= \sum \left(\binom{n}{2}-\Psi(\nu(f))\right)\\
  &= (q-1)\left(p(n)\binom{n}{2}-\sum_{\lambda\vdash n}\Psi(\lambda)\right).
\end{align*}

\begin{table}[htb]
  \centering
  \caption{}
  \begin{tabular}{cccc} \label{sumpsi}
    $n$ & $p(n)\binom{n}{2}$ & $\sum_{\lambda\vdash n} \Psi(\lambda)$ \\ \hline
    $2$ & $2$ & $3$ \\ \hline
    $3$ & $9$ & $12$ \\ \hline
    $4$ & $30$ & $36$ \\ \hline
    $5$ & $70$ & $78$ \\ \hline
    $6$ & $165$ & $171$ \\ \hline
    $7$ & $315$ & $309$ \\ \hline
    $8$ & $616$ & $573$ \\ \hline
    $9$ & $1080$ & $960$ \\ \hline
    $10$ & $1890$ & $1611$ \\
  \end{tabular}
\end{table}

From our computations giving Table \ref{sumpsi}, we are led to propose the following conjecture.

\begin{conjecture}
  For $n\geq 7$, $p(n)\binom{n}{2}-\sum_{\lambda\vdash n} \Psi(\lambda)$ is positive.
\end{conjecture}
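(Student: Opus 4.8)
The plan is to convert the statement into a comparison of two elementary partition statistics and then to control their sums by standard estimates on partition numbers, leaving a finite range to be checked directly.

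First I would establish the pointwise identity
\[\binom{n}{2}-\Psi(\lambda)=Q(\lambda')-\binom{l(\lambda)}{2},\]
where $Q(\mu):=\sum_{i<j,\ j\ge i+2}\mu_i\mu_j$ denotes the sum of products of non-adjacent parts of $\mu$. This follows by writing $Q(\lambda')=e_2(\lambda')-\sum_i\lambda'_i\lambda'_{i+1}$ with $e_2(\mu)=\sum_{i<j}\mu_i\mu_j$, and then using $e_2(\lambda')=\binom{n}{2}-\sum_i\binom{\lambda'_i}{2}=\binom{n}{2}-\sum_i(i-1)\lambda_i$ together with the definition of $\Psi$. Summing over $\lambda\vdash n$ and applying the conjugation bijection $\lambda\mapsto\lambda'=\mu$ (which turns $\sum_\lambda\binom{l(\lambda)}{2}=\sum_\lambda\binom{\lambda'_1}{2}$ into $\sum_\mu\binom{\mu_1}{2}$) yields the clean reformulation
\[p(n)\binom{n}{2}-\sum_{\lambda\vdash n}\Psi(\lambda)=\sum_{\mu\vdash n}\Big(Q(\mu)-\binom{\mu_1}{2}\Big).\]
Equivalently, writing $T(\lambda):=\sum_i\lambda_i\lambda_{i+1}$ and $n(\lambda):=\sum_i(i-1)\lambda_i$, one obtains $\sum_\lambda\Psi(\lambda)=\sum_\lambda n(\lambda)+\sum_\lambda T(\lambda)+\sum_\lambda\binom{\lambda_1}{2}$, so the conjecture is equivalent to the inequality
\[\sum_{\lambda\vdash n}\Big(n(\lambda)+T(\lambda)+\binom{\lambda_1}{2}\Big)<p(n)\binom{n}{2}\qquad(n\ge 7).\]

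The guiding idea is that each of the three statistics on the left has average of order $n^{3/2}$ over partitions of $n$, which is negligible against $\binom{n}{2}\sim n^2/2$. To make this effective I would bound the three sums separately. For $T$ the crude estimate $T(\lambda)\le\sum_i\lambda_i^2=2n(\lambda')+n$ gives $\sum_\lambda T(\lambda)\le 2\sum_\lambda n(\lambda)+n\,p(n)$, so everything reduces to $\sum_\lambda n(\lambda)$ and $\sum_\lambda\binom{\lambda_1}{2}$. For the first, using $\sum_\lambda n(\lambda)=\sum_\lambda\sum_i\binom{\lambda_i}{2}=\sum_p\binom{p}{2}\sum_{k\ge1}p(n-kp)$ and reindexing by $N=kp$ gives the exact divisor expansion
\[\sum_{\lambda\vdash n}n(\lambda)=\tfrac12\sum_{N=1}^n\big(\sigma_2(N)-\sigma_1(N)\big)\,p(n-N),\]
where $\sigma_j(N)$ is the sum of the $j$-th powers of the divisors of $N$; since $\sigma_2(N)\le\frac{\pi^2}{6}N^2$ this is at most $\frac{\pi^2}{12}\sum_N N^2 p(n-N)$. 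Counting partitions by largest part gives $\sum_\lambda\binom{\lambda_1}{2}\le\frac12\sum_N N^2 p(n-N)$. Thus both sums reduce to the single quantity $\Sigma(n):=\sum_{N=1}^n N^2 p(n-N)$, and an effective Hardy--Ramanujan bound for the ratio $p(n-N)/p(n)$ (essentially $p(n-N)\le p(n)\exp(-\pi N/\sqrt{6n})$ up to an explicit correction) yields $\Sigma(n)\le (C+o(1))\,p(n)\,n^{3/2}$ with $C=\frac{12\sqrt6}{\pi^3}$. Combining the three bounds gives $\sum_\lambda(n(\lambda)+T(\lambda)+\binom{\lambda_1}{2})\le C'\,p(n)\,n^{3/2}$ for an explicit $C'$, so comparison with $p(n)\binom{n}{2}$ forces positivity for all $n\ge N_0$ with an explicit $N_0$. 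The remaining range $7\le n<N_0$ would then be verified by direct computation, exactly as in Table \ref{sumpsi}.

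The main obstacle is this last analytic step: turning the asymptotic decay of $p(n-N)/p(n)$ into a fully rigorous, uniform bound whose constant is sharp enough that the resulting $N_0$ stays small enough for the finite verification to be practical. Because the crude estimate $T(\lambda)\le\sum_i\lambda_i^2$ inflates the constant $C'$, if $N_0$ comes out too large it will likely be necessary to estimate $\sum_\lambda T(\lambda)$ directly through its own generating function (grouping the adjacent products $\lambda_i\lambda_{i+1}$ within and between blocks of equal parts) rather than absorbing it into $\sum_\lambda n(\lambda)$.
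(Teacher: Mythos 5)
Note first that the paper offers no proof of this statement: it is posed as a conjecture, supported only by the computations in Table \ref{sumpsi}. So there is no argument of the author's to compare yours against, and any correct proof would be new content. Your combinatorial reductions are all correct and worth recording: the identity $\binom{n}{2}-\Psi(\lambda)=Q(\lambda')-\binom{l(\lambda)}{2}$ checks out (using $e_2(\lambda')=\binom{n}{2}-\sum_i\binom{\lambda_i'}{2}$ and $\sum_i\binom{\lambda_i'}{2}=\sum_i(i-1)\lambda_i$), the conjugation step is valid, the divisor expansion $\sum_{\lambda\vdash n}n(\lambda)=\tfrac12\sum_{N\le n}(\sigma_2(N)-\sigma_1(N))p(n-N)$ is a standard and correct computation, and the bounds $T(\lambda)\le\sum_i\lambda_i^2=2n(\lambda')+n$ and $\sum_\lambda\binom{\lambda_1}{2}\le\tfrac12\sum_N N^2p(n-N)$ are sound. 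This correctly reduces the conjecture to showing $\Sigma(n)=\sum_{N=1}^n N^2p(n-N)=o(n^2p(n))$ with explicit constants, and the heuristic constant $12\sqrt6/\pi^3$ is the right one.

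The genuine gap is the step you yourself flag: no rigorous, uniform bound on $p(n-N)/p(n)$ is supplied, and the inequality $p(n-N)\le p(n)\exp(-\pi N/\sqrt{6n})$ is false as written (it needs correction factors, and it degrades when $N$ is comparable to $n$). Without an effective version you get no explicit $N_0$, hence no well-defined finite check, hence no proof. Two concrete remedies: (i) the log-concavity of $p(n)$ for $n\ge 25$ (DeSalvo--Pak) gives $p(n-N)/p(n)\le\bigl(p(n-1)/p(n)\bigr)^N$ for $N\le n-25$ or so, which combined with an explicit lower bound on $p(n)/p(n-1)$ (available from effective Rademacher-type estimates) turns $\Sigma(n)\le C'p(n)n^{3/2}$ into a theorem with a computable $C'$; the tail $N>n/2$ is handled trivially by $N^2\le n^2$ and the exponential smallness of $\sum_{N>n/2}p(n-N)$ relative to $p(n)$. (ii) For the residual finite range, be aware that direct enumeration of partitions is only feasible up to roughly $n\approx 100$; if your $N_0$ exceeds that you will need exact formulas for all three sums, and while $\sum_\lambda n(\lambda)$ and $\sum_\lambda\binom{\lambda_1}{2}$ have divisor-sum or generating-function expressions, $\sum_\lambda T(\lambda)$ (adjacent products) does not reduce to a divisor sum and would need its own recursion, as you anticipate. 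Until the effective estimate and the finite verification are actually carried out, this is a credible strategy, not a proof.
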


\section*{Appendix}

\begin{table}[htb]
  \centering
  \caption{$n=3$ ($8$ classes)}
  \begin{tabular}{ccc} \label{n3}
    $[\nu]$ & $\Omega(\nu)$ & $\#[\nu]$ \\ \hline
    $|(3)|\times 1$ & $1$ & $N(1)$ \\ \hline
    $|(2,1)|\times 1$ & $-1$ & $N(1)$ \\ \hline
    $|(1,1,1)|\times 1$ & $-3$ & $N(1)$ \\ \hline
    $|(1)|\times 3$ & $3$ & $N(3)$ \\ \hline
    $|(2)|\times 1+|(1)|\times 1$ & $2$ & $N(1)(N(1)-1)/2$ \\ \hline
    $|(1,1)|\times 1+|(1)|\times 1$  & $1$ & $N(1)(N(1)-1)/2$ \\ \hline
    $|(1)|\times 2+|(1)|\times 1$ & $3$ & $N(2)N(1)$ \\ \hline
    $|(1)|\times 1+|(1)|\times 1+|(1)|\times 1$ & $3$ & $\binom{N(1)}{3}$ \\
  \end{tabular}
\end{table}


\begin{table}[htb]
  \centering
  \caption{$n=4$ ($22$ classes)}
  \begin{tabular}{ccc} \label{n4}
    $[\nu] $ & $\Omega(\nu)$ & $\#[\nu]$ \\ \hline
    $|(4)|\times 1$ & $3$ & $N(1)$ \\ \hline
    $|(3,1)|\times 1$ & $1$ & $N(1)$ \\ \hline
    $|(2,2)|\times 1$ & $-1$ & $N(1)$ \\ \hline
    $|(2,1,1)|\times 1$ & $-3$ & $N(1)$ \\ \hline
    $|(1,1,1,1)|\times 1$ & $-6$ & $N(1)$ \\ \hline
    $|(1)|\times 4$ & $6$ & $N(4)$ \\ \hline
    $|(2)|\times 2$ & $4$ & $N(2)$ \\ \hline
    $|(1,1)|\times 2$ & $2$ & $N(2)$ \\ \hline
    $|(3)|\times 1+|(1)|\times 1$ & $4$ & $N(1)(N(1)-1)/2$ \\ \hline
    $|(2,1)|\times 1+|(1)|\times 1$  & $2$ & $N(1)(N(1)-1)/2$ \\ \hline
    $|(1,1,1)|\times 1+|(1)|\times 1$ & $0$ & $N(1)(N(1)-1)/2$ \\ \hline
    $|(1)|\times 3+|(1)|\times 1$ & $6$ & $N(3)N(1)$ \\ \hline
    $|(2)|\times 1+|(2)|\times 1$ & $4$ & $\binom{N(1)}{2}$ \\ \hline
    $|(2)|\times 1+|(1,1)|\times 1$ & $3$ & $N(1)(N(1)-1)/2$ \\ \hline
    $|(1,1)|\times 1+|(1,1)|\times 1$ & $2$ & $\binom{N(1)}{2}$ \\ \hline
    $|(2)|\times 1+|(1)|\times 2$ & $5$ & $N(1)N(2)$ \\ \hline
    $|(1,1)|\times 1+|(1)|\times 2$ & $4$ & $N(1)N(2)$ \\ \hline
    $|(1)|\times 2+|(1)|\times 2$ & $6$ & $\binom{N(2)}{2}$ \\ \hline
    $|(2)|\times 1+|(1)|\times 1+|(1)|\times 1$ & $5$ & $N(1) \binom{N(1)-1}{2}$ \\ \hline
    $|(1,1)|\times 1+|(1)|\times 1+|(1)|\times 1$ & $4$ & $N(1) \binom{N(1)-1}{2}$ \\ \hline
    $|(1)|\times 2+|(1)|\times 1+|(1)|\times 1$ & $6$ & $N(2)\binom{N(1)}{2}$ \\ \hline
    $|(1)|\times 1+|(1)|\times 1+|(1)|\times 1+|(1)|\times 1$ & $6$ & $\binom{N(1)}{4}$ \\
  \end{tabular}
\end{table}

\begin{table}[htb]
  \centering
  \caption{$n=5$ ($42$ classes)}
  \begin{tabular}{ccc} \label{n5}
    $[\nu] $ & $\Omega(\nu)$ & $\#[\nu]$ \\ \hline
    $|(5)|\times 1$ & $6$ & $N(1)$ \\ \hline
    $|(4,1)|\times 1$ & $4$ & $N(1)$ \\ \hline
    $|(3,2)|\times 1$ & $1$ & $N(1)$ \\ \hline
    $|(3,1,1)|\times 1$ & $0$ & $N(1)$ \\ \hline
    $|(2,2,1)|\times 1$ & $-3$ & $N(1)$ \\ \hline
    $|(2,1,1,1)|\times 1$ & $-6$ & $N(1)$ \\ \hline
    $|(1,1,1,1,1)|\times 1$ & $-10$ & $N(1)$ \\ \hline
    $|(1,1,1,1)|\times 1+|(1)|\times 1$ & $-2$ & $N(1)(N(1)-1)/2$ \\ \hline
    The others & $\geq 0$ & $-$\\
  \end{tabular}
\end{table}

\clearpage

\begin{bibdiv}
  \begin{biblist}
    \bibselect*{ref}
  \end{biblist}
\end{bibdiv}

\end{document}